\documentclass{amsart}
\usepackage{amsmath,amssymb,amscd,amsthm,verbatim,alltt,amsfonts,array}
\usepackage[english]{babel}
\usepackage{latexsym}
\usepackage{amssymb}
\usepackage{euscript}
\usepackage{graphicx}
\usepackage[all]{xy}

\newtheorem{theorem}{Theorem}
\theoremstyle{plain}

\newtheorem{example}{Example}

\newtheorem{remark}{Remark}

\numberwithin{equation}{section}

\def\sing{\textsc{Singular}}

\begin{document}

\title{Algorithms in the classical N\'eron Desingularization}

\author{ Asma Khalid, Adrian Popescu and Dorin Popescu}

\date{}

\pagestyle{myheadings}
\markboth{Author(s) }{Title }

\maketitle

\begin{abstract}
   We give algorithms to construct the N\'eron Desingularization and the easy case from \cite{KK} of the General  N\'eron Desingularization.
\end{abstract}

\begin{quotation}
\noindent{\bf Key Words}: {Regular rings, Smooth morphisms,  Regular morphisms, Unramfied extension of discrete valuation rings, Algorithms.}

\noindent{\bf 2010 Mathematics Subject Classification}:  Primary 13B40, Secondary 14B25,13H05,13J15. \\

\end{quotation}

%\noindent{\bf Running title}
%Short title for running head (top of right hand page)

\thispagestyle{empty}

\section*{Introduction}

Let $A\subset A'$ be an unramified extension of discrete valuation rings (shortly DVR), that is the generator $x$ of the maximal ideal of $A$ generates also the maximal ideal of $A'$. Suppose that the induced field extensions $A/(x)\to A'/xA'$, Fr$(A)\to$ Fr$(A')$ are
separable, in other words the inclusion $A\to A'$ is regular. N\'eron \cite{N} (see here Theorem \ref{n}) proved that every sub-$A$-algebra $B\subset A'$ of finite type can be embedded in a regular  subring $D\subset A'$, essentially of finite type over $A$. Moreover, $D$ could be chosen contained in Fr$(B)$, that is $D$ could be seen as a  desingularization of $B$ made with respect of the inclusion $B\to A'$.

By Jacobian Criterion we see that $D$ is a localization of a smooth sub-$A$-algebra $C\subset A'$ containing $B$. Suppose that $B=A[Y]/(f)$, $Y=(Y_1,\ldots,Y_m)$, $f=(f_1,\ldots,f_p)$ and the inclusion $v:B\to A'$ is given by $Y\to y\in A'^m$. Replacing $B$ by $C$ means in fact to substitute the system of polynomial equations $f$ by a system of polynomials in more variables where it is possible to apply the Implicit Function Theorem. If $A'$ is Henselian then this could be very helpful. One example is the applications of N\'eron  Desingularization  to the  so-called the Artin approximation property of algebraic power series rings with coefficients in an excellent Henselian DVR (see \cite{A}).
Actually for this aim it is not necessary the  injectivity of $v$ and $C\subset A'$. Ploski
\cite{Pl} proved that if $A=\mathbb{C}\{x\}$, $x=(x_1,\ldots,x_n)$, $B=\mathbb{C}\{x,Y\}/(f)$ for some complex convergent power series $f$ in $x,Y$   then a $\mathbb{C}$-morphism $v:B\to  \mathbb{C}[[ x ]]$ factors through an $A$-algebra of type $\mathbb{C}\{x,Z\}$ for some new variables $Z$.

Ploski's result gave the idea of the so-called the  General N\'eron Desingularization, (see \cite{P0}, \cite{P1},   \cite{S}) which says that   for special (that is regular) morphisms $A\to A'$ of Noetherian rings any $A $-morphism $v:B\to A'$ with  $B$ a  $A$-algebra of finite type, factors through a smooth $A$-algebra $C$, that is $v$ is a composite $A$-morphism $B\to C\to A'$. This desingularization is  constructive when $\dim  A=\dim A'=1$ (see \cite{AP}, \cite{PP}, \cite{KPP}).

In \cite{KK} an easy proof of the   General N\'eron Desingularization  is given in the case when $\dim A=0$, $\dim A'=1$ and $A$, $A'$ have the same residue field. Here
it is our goal  to  provide some algorithms in this direction. We start presenting an algorithm to construct the N\'eron
Desingularization in the case when $A$, $A'$ are DVR. In Section 1 we recall   N\'eron's proof in the idea presented by Artin \cite[Theorem 4.5]{A}  in a special case (see also \cite[Theorem 5.1, page 683]{P'}, \cite[pages 171-172]{PP0}). The present proof  is taken from \cite{P2} and we give it here for the sake of completeness.
 In Sections 2, 3 we present shortly a constructive proof and give an  algorithm to construct N\'eron Desingularization in the easier case when $A$, $A'$ have the same residue field as in \cite[Theorem 10]{P}. We do not implement our algorithm. The implementation done in \cite{AP} gives a hard General N\'eron Desingularization because it is constructed in a general case, that is when $A$, $A'$ are Noetherian local domains of one dimension and with possible different residue fields and so involving some extra equations and new variables.
 In the last sections we present an algorithm to construct the easy case from \cite{KK} of the General  N\'eron Desingularization.

We owe thanks to the Referee who corrected our algorithms.

\section{Preliminaries on N\'eron Desingularization}

Let $A\subset A'$ be the unramified extension of DVR as above and $B$  a finite type sub-$A$-algebra of $A'$, let us say $B=A[y]$ for some elements $y=(y_1,\ldots,y_N) $ of $A'$. Let $\sigma :A[Y]\to A'$ be the $A$-morphism  $Y\to y$, $I=\mbox{Ker}\ \sigma$  and $f=(f_1,\ldots,f_r) $ be a regular system of parameters of the regular local ring $A[Y]_I$.
Thus $r=\mbox{height}\  I$. By separability of Fr $A\subset$ Fr $B$ we see that the Jacobian matrix $(\partial f/\partial Y)$ has a $r\times r$-minor $M$ such that $M(y)\not =0$.

 The minimum valuation $l(B)$ of the values  of the $r\times r$-minors of the Jacobian matrix $(\partial f_i/\partial Y_j)$ in $y$ is an invariant of $B.$ Indeed, tensorizing with $A'$ the module of differentials $\Omega_{B/A}$ we get the following exact sequence
 $$A'\otimes_B I/I^2\xrightarrow{\phi} \sum_{i\in [N]} A'd Y_i\to A'\otimes_B\Omega_{B/A}\to 0,$$
 where  $[N]=\{1,\ldots,N\}$ and $\phi$ is given by $g\to \sum_i (\partial g/\partial Y_i)(y)d Y_i$. Applying
  the Invariant Factor Theorem we see that the finite type $A'$-module  $A'\otimes_B\Omega_{B/A}$ is isomorphic with
$(\oplus_{i=1}^t A'/(x^{a_i}))\oplus A'^k$ for some $t,k,a_i\in \mathbb N$. Thus we may suppose that  Im $\phi$ has the diagonal form, where the only non zero elements are $(x^{a_i})$, that is $\sum_{i\in [t]} a_i$ is  the minimum valuation of the values  of the $t\times t$-minors of the Jacobian matrix $(\partial f_i/\partial Y_j)$ in $y$. It follows that $t=r$ and $l(B)=\sum_{i\in [r]} a_i$.

If $l(B)=0$ then we may choose $f_1,\ldots,f_r$ such that the Jacobian matrix $(\partial f/\partial Y)$ has an $r\times r$-minor $M$ such that $M(y)$ is invertible in $A'$, that is $M\not \in  q=\sigma^{-1}(xA')$. By Jacobian Criterion \cite[Theorem 30.4]{Mat} we see that $B_{xA'\cap B}\cong (A[Y]/I)_q$ is regular.

\begin{theorem} [N\'eron] \label{n} If $A\subset A'$ induces separable field extensions on fraction and residue fields then there exists a  sub-$A$-algebra of finite type $C$ of $A'$ containing $B$  such that $C_{xA'\cap C}$ is a regular local ring.
\end{theorem}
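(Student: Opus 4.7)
I would argue by induction on the nonnegative integer invariant $l(B)$ set up in the paragraph before the theorem. The base case $l(B) = 0$ is in fact already disposed of there: one can choose the regular system of parameters $f_1, \ldots, f_r$ so that some $r \times r$ minor $M$ of $(\partial f/\partial Y)$ does not lie in $q = \sigma^{-1}(xA')$, and then the Jacobian Criterion shows that $B_{xA' \cap B} \cong (A[Y]/I)_q$ is itself regular, so $C = B$ works. Hence the whole content of the theorem is in the inductive step.

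The inductive step I would carry out is: given $B$ with $l(B) > 0$, produce a finite type sub-$A$-algebra $B \subset C \subset A'$ with $l(C) < l(B)$; iterating at most $l(B)$ times drops the invariant to zero, and the base case then finishes the proof. To build $C$, I would fix $f_1,\ldots,f_r$ as above and pick an $r \times r$ minor $M = \det P$ of the Jacobian with $v_{A'}(M(y)) = a \ge 1$ realizing the smallest value in the diagonal form of $\mathrm{Im}\,\phi$ computed before the theorem statement. Using Cramer's identity $P \cdot \mathrm{adj}(P) = M \cdot I_r$ together with the Taylor expansions of the $f_i$ around $y$ and the vanishing $f_i(y)=0$, I would extract finitely many elements $z_1,\ldots,z_s \in A'$, forced into $A'$ by the construction, whose defining polynomial relations take the shape $xZ_k - q_k(Y) \in A[Y,Z]$. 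I then set $C := A[y, z_1,\ldots,z_s] \subset A'$, which is automatically of finite type over $A$ and contained in $A'$.

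The key verification is that $l(C) \le l(B) - 1$: in the enlarged presentation $C = A[Y, Z]/J$, the new rows of the Jacobian coming from the relations $xZ_k - q_k(Y)$ contribute, together with the old minor $M$, an $(r+s)\times(r+s)$ minor whose $x$-valuation at $(y,z)$ is exactly $a - 1$. This is the point where the separability hypotheses enter essentially: separability of $\mathrm{Fr}\,A \to \mathrm{Fr}\,A'$ guarantees the existence of a nonvanishing minor of full rank to start from, while separability of the residue extension is what allows the Taylor-style corrections of $y$ to remain inside $A'$ via a smooth-lifting (Hensel-type) argument.

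The main obstacle I expect is the bookkeeping in this last step: one has to check both that the enlarged system is still a regular system of parameters of the appropriate localization of $A[Y, Z]$ of height $r+s$, and that the minor chosen at $(y,z)$ genuinely realizes the minimum valuation contributing to $l(C)$. Once this is established, the induction closes and produces a finite type $C \subset A'$ containing $B$ with $l(C)=0$, which as noted gives $C_{xA' \cap C}$ regular, completing the proof.
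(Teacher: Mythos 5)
Your overall strategy---induction on $l(B)$, adjoining to $B$ finitely many elements of $A'$ whose defining relations have the shape $xZ_k-q_k(Y)$, and checking that an enlarged Jacobian minor has strictly smaller valuation---is the same skeleton as the paper's proof, but the heart of the inductive step is missing. You never identify which elements $z_k$ to adjoin: the proposed mechanism ``Cramer's identity $P\cdot\mathrm{adj}(P)=M\,\mathrm{Id}_r$ plus Taylor expansion of the $f_i$ around $y$'' does not produce them, since $f(y)=0$ exactly, so expanding at $y$ yields neither new elements of $A'$ divisible by $x$ nor any reason for the invariant to drop. In the paper the new elements come from a different source: because $B_{xA'\cap B}$ is not regular, the classes of $f_1,\ldots,f_r$ are linearly dependent in $qA[Y]_q/q^2A[Y]_q$ (with $q=\sigma^{-1}(xA')$); one chooses a maximal independent subfamily $\bar f_1,\ldots,\bar f_e$ with $e<r$, completes it by $\bar h_1,\ldots,\bar h_s$ to a regular system of parameters of $(A/(x))[Y]_q$, and adjoins $t_k=h_k(y)/x$, which lie in $A'$ simply because $h_k\in q$. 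The dependence relations $E_if_i-\sum_c L_{ic}f_c\in(x,h_1,\ldots,h_s)^2$, rewritten through $g_k=xT_k-h_k$, are exactly what lets one replace $E_if_i$ by $x^2R_i$ in the new presentation and exhibit a maximal minor of valuation $l(B)-(r-e)<l(B)$. Nothing in your sketch plays the role of this dependence-plus-completion step, and your quantitative claim that the drop is ``exactly $a-1$'' is unsupported.

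You also misplace where the hypotheses enter. Separability of the fraction field extension only guarantees that some $r\times r$ minor of $(\partial f/\partial Y)$ is nonzero at $y$, i.e.\ that $l(B)$ is defined; separability of the residue field extension is used to show that the Jacobian of $(f_1,\ldots,f_e,h_1,\ldots,h_s)$ has rank $e+s$ modulo $q$, which is precisely what makes the enlarged minor have small valuation. There is no ``smooth-lifting (Hensel-type) argument'' anywhere: $A'$ is not assumed Henselian, and the adjoined elements stay in $A'$ for the elementary reason that $h_k(y)\in xA'$. Finally, the point you dismiss as bookkeeping---that the kernel $Q\subset A[Y,T]$ of the new presentation has height $r+s$, so the minor you produce really bounds $l(C)$---is settled in the paper by noting that $C=B[t]$ has the same fraction field as $B$, hence the same dimension; this needs an argument in your write-up as well, not just a flag.
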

\begin{proof} If $B_{xA'\cap B}$ is regular then  we may take for $C$ a localization of $B$.  Suppose that $B_{xA'\cap B}$ is not regular. Then $f_1,\ldots,f_r$ cannot be part of a regular system of parameters of $A[Y]_q$, and so $f_1,\ldots, f_r$ induce a linearly dependent system of elements $\bar{f}$ in $qA[Y]_q/q^2A[Y]_q$. Also note that $l(B)>0$.
 Assume that ${\bar f}_1,\ldots, {\bar f}_e$, $e<r$, induce modulo $q^2$ a maximal linearly independent subsystem of $\bar f$ in  $qA[Y]_q/q^2A[Y]_q$.
 Then we may complete  ${\bar f}_1,\ldots, {\bar f}_e$ with  ${\bar h}_1,\ldots, {\bar h}_s$ up to a regular system of parameters of the regular ring
 $A/(x)[Y]_q$, thus $e+s=\mbox{height}\ q>\mbox{height}\ I=r$.   Since the field extension $A/(x)\to Q(A[Y]/q)$ is separable we see that the Jacobian matrix
$((\partial f_i/\partial Y_j)|(\partial h_k/\partial Y_j))_{i\in [e],k\in [s],j\in [N]}$ has rank $e+s$ modulo $q$. Note that ${\bar f}_i$ is linearly
 dependent on ${\bar f}_1,\ldots, {\bar f}_e$  modulo $q^2$  for $e<i\leq r$ and   there exist $E_i, L_{ic}\in A[Y]$, $E_i\not\in q$ such that
$$E_if_i-\sum_{c=1}^e L_{ic}f_c\in q^2.$$
Moreover we may choose $E_i, L_{ic}$ such that
$$E_if_i-\sum_{c=1}^e L_{ic}f_c\in (x,h_1,\ldots,h_s)^2,$$
for some $h_k\in A[Y]$ lifting ${\bar h}_k$. Set $g_k=xT_k-h_k\in A[Y,T]$, $k\in [s]$,  $T=(T_1,\ldots,T_s)$. Since $(x,h_1,\ldots,h_s)=(x,g_1,\ldots,g_s)$ we get
$$E_if_i-\sum_{c=1}^e L_{ic}f_c=\sum_{k,k'=1}^s S_{ikk'}g_kg_{k'}+x\sum_{k=1}^s F_{ik}g_k+x^2R_i,$$
for some $R_i,S_{ikk'}, F_{ik}\in A[Y,T]$. By construction, $h(y)\equiv 0$ modulo $x$, that is there exists $t\in A'^s$ such that $h(y)=xt$ and so $g_k(y,t)=0$. It follows that $R_i(y,t)=0$. Taking derivations above we get
$$E_i(y)(\frac{{\partial f_i}}{{\partial Y_j}})(y)-\sum_{c=1}^e L_{ic}(y)(\frac{\partial f_c}{\partial Y_j})(y)=
x\sum_{k=1}^s F_{ik}(y,t)(\frac{\partial g_k}{\partial Y_j})(y,t)+x^2(\frac{\partial R_i}{\partial Y_j})(y,t),$$
$$0=x\sum_{k=1}^s F_{ik}(y,t)(\frac{\partial g_k}{\partial T_{k'}})(y,t)+x^2(\frac{\partial R_i}{\partial T_{k'}})(y,t),$$
for $e<i\leq r$.
Note that the Jacobian matrix $J$ associated to the $(r+s)$-system \\
$(f_1,\ldots,f_{e},E_{e+1}f_{e+1},\ldots,E_rf_r,g_1,\ldots,g_s)$ has a minor $M$ whose value  in $(y,t)$ has the valuation $\leq l(B)+s$, because $E_i(y)\not \in xR'$ and the Jacobian matrix  $(\partial g_k/\partial T_{k'})$ is $x\mbox{Id}_s$, $\mbox{Id}_s$ being the identity matrix.

Moreover the valuation of $M(y,t)$ can be $\leq l(B)+r-e$.  Indeed, by elementary transformations on the first $r$-lines and the first $N$-columns of $J(y,t)$ we get a matrix $G$ with a diagonal $r\times N$-block on the above lines and rows. Since $(\partial (f_i,h_k)/\partial Y_j)$, $i\in [e],k\in [s]$, $j\in [N]$ has rank $e+s$ modulo $q$, we may choose an invertible $(s-r+e)\times (s-r+e)$-minor $U$ of the block $P$ of $(\partial h_k/\partial Y_j)$ given on the columns $r+1,\ldots, e+s$ after renumbering $Y$. Assume that $U$ is given on the lines $r+j_1,\ldots,r+j_{s-r+e}$ and let $\{u_1,\ldots,u_{r-e}\}=[s]\setminus \{j_1,\ldots,
j_{s-r+e}\}$.
 $G$ has the following form

 $$\left(
     \begin{array}{ccccc}
       \mbox{Id}_e & 0 & 0 & 0 & 0 \\
       0 & D & 0 & 0 & 0 \\
       \square & \square & P & \square & x\mbox{Id}_s \\
     \end{array}
   \right)
 $$
where $D$ is a diagonal matrix with $x^{a_{e+1}},\ldots,x^{a_{r}}$ on the diagonal and $l(B)=\sum_{i=1}^{r-e} a_i$.

   Then the $(r+s)\times (r+s)$-minor of $G$ given on the columns \\
$1,\ldots,r,r+1,\ldots, e+s, N+u_1,\ldots,N+u_{r-s}$ has the form $x^{l(B)+r-e}\det U$, which  has the
valuation $l(B)+r-e$.

On the other hand, adding   to the block of $J$ given by the rows  $e+1,\dots,r$ the first $e$ rows multiplied on left in order with $-L_{e+1,c}(y,t),\ldots, -L_{r,c}(y,t)$ and the last $s$ rows multiplied  on left in order with $-x F_{e+1,k}(y,t),\ldots,-xF_{r,k}(y,t)$ we get the Jacobian matrix associated to
$(f_1,\ldots,f_{e},x^2R_{e+1},\ldots,x^2R_r,g_1,\ldots,g_s)$. Thus we found a system of polynomials
$(f_1,\ldots,f_{e},R_{e+1},\dots,R_r,g_1,\ldots,g_s)$ in the kernel $Q$ of the $A$-morphism $A[Y,T]\to A'$ such that its Jacobian matrix has in $(y,t)$
a maximal minor of valuation $l(B)-r+e<l(B)$. Note that $C=B[t]$ has the same dimension with $B$ having the same fraction field and so $\mbox{height}\
 Q=N+s-\dim C= N+s-\dim B=r+s$. It follows that $l(C)<l(B)$. Using this construction by recurrence we arrive finally to a $C$ with $l(C)=0$ which is enough.
\hfill\ \end{proof}

\begin{remark} The above proof is not constructive since it is based on the  induction on $l(B)$. This is the reason to recall a constructive proof in the next section.
\end{remark}

\section{A Constructive N\'eron Desingularization}

A ring morphism $u:A\to A'$ of Noetherian rings has  {\em regular fibers} if for all prime ideals $P\in \mbox{Spec}\  A$ the ring $A'/PA'$ is a regular  ring, i.e. its localizations are regular local rings. It has {\em geometrically regular fibers}  if for all prime ideals $P\in \mbox{Spec}\  A$ and all finite field extensions $K$ of the fraction field of $A/P$ the ring  $K\otimes_{A/P} A'/PA'$ is regular. If $A\supset {\mathbb Q}$ then regular fibers of $u$ are geometrically regular. We call $u$ {\em regular} if it is flat and its fibers are geometrically regular. A regular morphism is {\em smooth} if it is finitely presented and it is {\em essentially smooth} if it is a localization of a finitely presented morphism.

The N\'eron Desingularization presented in the first section is a kind of
{\em smoothification} with respect to the inclusion $B\to A'$. Now we want
to find a smoothification with respect to an $A$-morphism $v:B\to A'$ in the case when $A=k[x]_{(x)}$, $A'=k[[ x ]]$, $k$ being a field and $x$ a variable.   Since we cannot give to a computer the whole informations concerning the coefficients of the formal power series $v(Y_j)$, it is better to work from the beginning with $A$-morphisms
$v:B\to A'/x^mA'$ for some $m\gg 0$. This is done in \cite[Theorem 10]{P}. Next we recall in sketch this construction since we need it for the algorithm. In \cite{AP}, \cite{PP} such algorithms are presented and even implemented for the so-called the General N\'eron Desingularization but in our case the things are simpler.

If $f=(f_1,\ldots,f_r)$, $r\leq n$ is a system of polynomials from $I$ then we consider an $r\times r$-minor $M$ of the Jacobian matrix $(\partial f_i/\partial Y_j)$.  Let  $c\in \mathbb N$. Suppose that there exist an $A$-morphism $v:B\to A'/(x^{2c+1})$ and $L\in ((f):I)$ such that
$A' v(LM)= (x)^c/(x)^{2c}$, where for simplicity we write $v(LM)$ instead $v(LM+I)$. We may assume that $M=\det((\partial f_i/\partial Y_j)_{i,j\in [r]})$.

\begin{theorem}[Popescu \cite{P}]\label{m} There exists a $B$-algebra $C$ which is smooth over $A$ such that every $A$-morphism $v':B\to A'$ with $v'\equiv v \ \mbox{modulo}\ x^{2c+1}$ (that is $v'(Y)\equiv v(Y) \ \mbox{modulo}\
 x^{2c+1}$) factors through $C$.
\end{theorem}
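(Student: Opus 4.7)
The plan is to construct $C$ as a finitely presented $A$-algebra by introducing new variables that parametrize the lifts $v\leadsto v'$ and explicitly inverting the relevant Jacobian minor, so that smoothness follows from the Jacobian criterion and the factorization property from Taylor expansion.

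First I would fix a polynomial lift $y\in A^m$ of $v(Y)\in(A'/x^{2c+1})^m$, which exists because $A=k[x]_{(x)}$ and $A'=k[[x]]$ share uniformizer and residue field. Since $f_i\in I=\ker\sigma$, we have $f_i(y)\in x^{2c+1}A$, so we may write $f_i(y)=x^{2c+1}g_i$ with $g_i\in A$ explicit. Any $v'\colon B\to A'$ with $v'\equiv v\pmod{x^{2c+1}}$ has the form $v'(Y_j)=y_j+x^{2c+1}z_j$, and Taylor-expanding $f_i\circ v'=0$ and dividing by $x^{2c+1}$ yields, for $i\in[r]$,
\[
   g_i+\sum_{j=1}^m\tfrac{\partial f_i}{\partial Y_j}(y)\,z_j+x^{2c+1}R_i(y,z)=0
\]
for certain polynomials $R_i\in A[Z]$.

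I would then introduce fresh variables $W_1,\dots,W_m,V$ and set
\[
   F_i(W):=g_i+\sum_{j=1}^m\tfrac{\partial f_i}{\partial Y_j}(y)\,W_j+x^{2c+1}R_i(y,W)\quad(i\in[r]),
\]
together with $\Delta(W):=\det\bigl((\partial F_i/\partial W_k)_{i,k\in[r]}\bigr)=M(y)+x^{2c+1}(\cdots)$. Define
\[
   C:=A[W,V]/\bigl(F_1,\dots,F_r,\,V\Delta(W)-1\bigr),
\]
with $B$-algebra structure given by $Y_j\mapsto y_j+x^{2c+1}W_j$. This map is well defined: for $i\in[r]$ the image of $f_i$ is $x^{2c+1}F_i=0$; for any other $g\in I$, the relation $Lg\in(f_1,\dots,f_r)$ implies $L(y+x^{2c+1}W)\cdot g(y+x^{2c+1}W)=0$ in $C$, and $L(y+x^{2c+1}W)$ is a non-zero-divisor in $C$ via the invertibility of $\Delta$ combined with $L(y)M(y)=x^c\cdot(\text{unit})$. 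Smoothness of $C$ over $A$ follows from the Jacobian criterion: the determinant of the Jacobian of $(F_1,\dots,F_r,V\Delta-1)$ with respect to $(W_1,\dots,W_r,V)$ equals $\pm\Delta^2$, which is a unit in $C$. For the factorization property, setting $W_j\mapsto z_j$ and $V\mapsto 1/\Delta(z)$ yields an $A$-morphism $C\to A'$ extending $v'$: the equations $F_i=0$ hold by the Taylor expansion above, and $\Delta(z)\in A'$ is non-zero because $M(y)$ has valuation $\le c$ by the hypothesis on $LM$.

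The main obstacle will be the algebraic book-keeping that makes the above rigorous: one must show that $\Delta(W)\in A[W]$ is a non-zero-divisor modulo $(F_1,\dots,F_r)$ (so that the localization is non-trivial) and that inverting $\Delta$ really forces $L(y+x^{2c+1}W)$ to be invertible in $C$ (possibly by adjoining an extra variable to invert it directly). Both facts rely on the interplay between $L\in((f):I)$ and the precise valuation condition $v(LM)=x^c\cdot(\text{unit})$; once they are established, smoothness and factorization follow from the Jacobian computation and the explicit substitution indicated above.
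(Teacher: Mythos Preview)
Your construction of $C$ is smooth over $A$, but the factorization step fails in general. To extend $v'$ to a map $C\to A'$ you must send $V\mapsto \Delta(z)^{-1}$, and for this $\Delta(z)$ has to be a \emph{unit} in $A'=k[[x]]$, not merely nonzero. From $\Delta(z)=M(y)+x^{2c+1}(\cdots)$ you only get that $\Delta(z)$ has the same $x$-adic valuation as $M(y)$; the hypothesis is that $L(y)M(y)$ has valuation exactly $c$, so $M(y)$ can have any valuation $c_1$ with $0\le c_1\le c$. Whenever $c_1>0$ (the typical situation), $\Delta(z)$ is a non-unit and there is no $A$-morphism $C\to A'$ lifting $v'$. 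The same problem recurs if you also invert $L(y+x^{2c+1}W)$: its image under $W\mapsto z$ has valuation $c-c_1$, again not a unit unless $c_1=c$. In short, the substitution $Y\mapsto y+x^{2c+1}W$ forces you to invert an element of positive valuation, which blocks the factorization.

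The paper avoids this by a different change of variables. One completes the Jacobian $(\partial f/\partial Y)$ to a square matrix $H$, takes $G=L\cdot\mathrm{adj}(H)$ so that $GH=HG=P\cdot\mathrm{Id}_n$ with $P=LM$, and substitutes $Y=y'+d\,G(y')T$ where $d=P(y')$ has valuation exactly $c$. Because $(\partial f/\partial Y)\cdot G=(P\,\mathrm{Id}_r\mid 0)$, Taylor expansion gives $f(Y)-f(y')\equiv d^2(T+Q)$ with $Q\in T^2A[T]^r$, and since $f(y')=d^2a$ with $a\in xA^r$, the new equations $g_i=a_i+T_i+Q_i$ have Jacobian congruent to $\mathrm{Id}_r$ modulo $(T)$. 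The elements $s,s'$ one inverts then lie in $1+(T)$. Given $v'$, solving $d\,G(y')t=v'(Y)-y'$ yields $t\in xA'^n$ (because $v'(Y)-y'\in x^{2c+1}A'^n=d^2xA'^n$ and $H(y')G(y')=d\,\mathrm{Id}_n$), so $s(t),s'(t)\in 1+xA'$ are genuine units of $A'$. This adjoint-matrix trick, absent from your argument, is precisely what makes the factorization land in $A'$ rather than its fraction field.
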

\begin{proof}

Since $A/(x^{2c+1})\cong A'/(x^{2c+1})$, $y'\in A^n$ can be choose such that \linebreak $v(Y)=y' + (x^{2c+1})$.
Set $P=LM$ and $d=P(y')$. We have $dA=x^cA$.

Let  $H$ be the $n\times n$-matrix obtained adding down to $(\partial f/\partial Y)$ as a border the block $(0|\mbox{Id}_{n-r})$. Let $G'$ be the adjoint matrix of $H$ and $G=LG'$. We have
$GH=HG=LM \mbox{Id}_n=P\mbox{Id}_n$
and so
$d\mbox{Id}_n=P(y')\mbox{Id}_n=G(y')H(y').$

 Let
 $h=Y-y'-dG(y')T,$
 where  $T=(T_1,\ldots,T_n)$ are new variables.  Since
$Y-y'\equiv dG(y')T\ \mbox{modulo}\ h$
and
$f(Y)-f(y')\equiv \sum_j\partial f/\partial Y_j(y') (Y_j-y'_j)$
modulo higher order terms in $Y_j-y'_j$, by Taylor's formula we see that we have
$$f(Y)-f(y')\equiv  \sum_jd\partial f/\partial Y_j(y') G_j(y')T_j+d^2Q=
dP(y')T+d^2Q=d^2(T+Q)$$
$\mbox{modulo}\ h,$
 where $Q\in T^2 A[T]^r$. This is because $(\partial f/\partial Y)G=(P\mbox{Id}_r|0)$.  We have $f(y')=d^2a$ for some $a\in xA^r$.
 Set $g_i=a_i+T_i+Q_i$, $i\in [r]$ and  $E=A[Y,T]/(I,g,h)$.
 Then there exists $s,s'$ in $1+(T)$ as it is proved in \cite[Theorem 10]{P} such that $C:=E_{ss'}\cong (A[T]/(g))_{ss'}$ is smooth . Moreover, $v'$ factors through  $C $.
\hfill\ \end{proof}
\vskip 0.3 cm
\section{N\'eron Desingularization Algorithm}\label{algo1}
\vskip 0.3 cm
{\bf Neron-Desingularization\_Dim1}
\vskip 0.3 cm
Input: $N\in \mathbb{Z}_{>0}$ a bound. $ A=k[x]_{(x)}$, $k$ being a field, $A'=k[[ x ]]$,
$B=A[Y]/I,\linebreak I=(f_1,\ldots, f_q), f_i\in k[x,Y], Y=(Y_1,\ldots,Y_n)$,  $v:B\to A'$ an $A$-morphism and  $y' \in {k[x]}^n$ approximations mod $(x)^N$ of  $v(Y)$.

Output: $(C,\pi)$ given by the following data: $C=(A[Z]/(L))_{hM}$ standard smooth, $Z=(Z_1,\ldots,Z_p)$, $L=(b_1,\ldots,b_{q'})\subset k[x,Z]$, $h\in k[x,Z]$, $M$ a $q\times q$-minor of $(\partial b_i/\partial Z_j)$, $\pi:B\to C$ an $A$-morphism given by $\pi(Y_1),\ldots,\pi(Y_n)$ factorizing $v$,  or the message ``$y'$, $N$ are not well chosen''.

\begin{enumerate}

  \item Compute $f:= (f_1,\ldots,f_r)$ in $I$  such that $v(((f):I)\Delta_f)\not\subset (x)^N $

  \item Reorder the variables $Y$ such that for $M:=\det(\partial f_i/\partial Y_j)_{i,j\in [r]})$ and a suitable $L\in (f):I $ we have $v(LM)\not \in (x)^N$

  \item $P:=LM$; $d:=P(y')$; $c:=ord(d)$

  \item If $2c+1 > N$, return ``$y'$, $N$ are not well chosen''
  \item Complete $(\partial f_i/\partial Y_j)_{i \leq r}$  with $(0| (\mbox{Id}_{n-r}))$ in order to obtain a square matrix $H$
  \item Compute $G'$ the adjoint matrix of $H$ and $G:=LG'$
  \item $h=Y-y'-d G(y')T,\  T=(T_1,\ldots, T_n)$
  \item Compute $Q\in T^2 A[T]^r$ such that \\ $f(Y)-f(y')= \sum_j d(\partial f/\partial Y_j)(y') G_j(y')T+d^{2}Q$
  \item Compute $a\in xA^r$ such that $f(y')=d^{2}a$
  \item For $i=1$ to $r$, $g_i=a_i+T_i+Q_i$
  \item $E:=A[Y,T]/(I,g,h)$
  \item Compute $s$ the $r\times r$ minor defined by the first $r$ columns of $(\partial g/\partial T)$
  \item Compute $s'$ such that $P(y'+ d G(y')T)=ds'$
  \item return $C:=E_{ss'}$ and the canonical map $\pi:B\to C$.
\end{enumerate}

\begin{example}\label{example1}{\em
 Let $N=10$,
$A={\mathbb Q}[x]_{(x)}$, $A'={\mathbb Q}[[ x ]]$,
$B=A[Y_1,Y_2,Y_3,Y_4]/(f_1, f_2),$ $f_1=Y_1^3-Y_2^2, f_2=3{Y_1}^2Y_3-2Y_2Y_4$,
$r=2$,
$v:B\rightarrow {\mathbb Q}[[ x ]], v(Y_1)=x^2u_1,$ $v(Y_2)=x^3u_2,v(Y_3)=v_1, v(Y_4)=xv_2$ where $u_1,v_1$ are two formal power series from ${\mathbb Q}[[ x ]]$ which are algebraically independent over ${\mathbb Q}(x)$, $u_1(0)=v_1(0)=1$ and $u_2,v_2$ are defined as it follows. By the Implicit Function
Theorem there exists $u_2\in \mathbb Q[[ x ]]$ such that $u_2^2=u_1^3$. Set $v_2=(3/2)u_1^2v_1u_2^{-1}$ and
$L=1.$ Now we follow the steps of algorithm

We get the following minors: $M_1=6Y_1^2Y_4+12Y_1Y_2Y_3, M_2=9{Y_1}^4, M_3=6{Y_1}^2Y_2,$ $M_4=4{Y_2}^2, M_5=0.$ Note that $v(M_i)\in (x)$ for all the minors $M_i$. We choose $M_i$ for which the valuation of $M_i(y)$ is minimum, in this case $M_1$ which is maped by $v$ in $x^5(6u_1^2v_2+12u_1u_2v_1)$. Then $c=5$ and $2c+1>10.$

Output: $y'$, $N$  are not well chosen.}
\end{example}

\begin{example}\label{example2}{\em
Let $A={\mathbb Q}[x]_{(x)}$, $A'={\mathbb Q}[[ x ]]$,
$B=A[Y_1,Y_2,Y_3,Y_4]/(f_1, f_2)$, \linebreak
$f_1=Y_1^3-Y_2^2, f_2=3{Y_1}^2Y_3-2Y_2Y_4.$ The only difference from Example \ref{example1} consists in the map $v:B\rightarrow {\mathbb Q}[[ x ]]$, $v(Y_1)=u_1,v(Y_2)=u_2$,
$v(Y_3)=v_1$, $v(Y_4)=v_2$, where $u_1,u_2,v_1,v_2,L$ are
as in Example \ref{example1}.

We now follow the steps of algorithm.
We get the following minors: $M_1=6Y_1^2Y_4+12Y_1Y_2Y_3, M_2=9{Y_1}^4,M_3=6{Y_1}^2Y_2, M_4=4{Y_2}^2, M_5=0.$ Note that $v(LM_i)\notin (x)$ for all the
minors $M_i$. We take $M=4Y_2^2$, which is mapped by $v$ in $4u_2^2 $ and thus $v(M)\notin (x)$  is invertible, that is $c=0$. So $C=B_{M}\cong {(A[Y_1,Y_2]/(Y_1^3-Y_2^2))}_{4Y_2^2}$ is smooth over $A.$}
\end{example}

\begin{example}\label{example3}{\em
Let $A={\mathbb Q}[x]_{(x)}$, $A'={\mathbb Q}[[ x ]]$, $B=A[x^2u_1,x^3u_2,v_1,xv_2]$, where $u_1$,$u_2$,$v_1$,$v_2$ are as in Example \ref{example1}.

Let $\phi:A[Y_{1},Y_{2},Y_{3},Y_{4}]\to B$ be given by $Y_{1}\to x^2u_1$, $Y_{2}\to x^3u_2$, $Y_{3}\to v_1$, $Y_{4}\to xv_2$.
Then $\mbox{Ker}\ \phi$ contains $g_1=Y_{1}^3-Y_{2}^2$, $g_2=3Y_{1}^2Y_{3}-2Y_{2}Y_{4}$. Set $a_1=27Y_{2}Y_{3}^3-8Y_4^3$, $a_2=9Y_{1}Y_{3}^2-4Y_{4}^2$,
$a_3=2Y_{1}Y_{4}-3Y_{2}Y_{3}$. The minimal prime ideals of $(g_1,g_2)$ are $P_1=(g_1,g_2,a_1,a_2,a_3)$, $P_2=(Y_{1},Y_{2})$, both ideals having the
height $\geq 2$ because the great common divisor of $g_1,g_2$ is one. Note that  $\mbox{height}\ \mbox{Ker}\ \phi=2$ because $\dim(A[Y]/\mbox{Ker}\ \phi)=\dim(B)=2$, $u_1,v_1$ being algebraically independent over ${\mathbb Q}[x]$.

We have $\mbox{Ker}\ \phi\supset P_1$ since $\mbox{Ker}\ \phi\supset (g_1,g_2)$ and $Y_{1}\notin \mbox{Ker}\ \phi$. In fact
$\mbox{Ker}\ \phi= P_1 $ because $\mbox{height} P_1\geq \mbox{height}\ \mbox{Ker}\ \phi$ as above.
 Note that $Y_{2}^2 a_2 \in (g_1,g_2)$, $Y_{2}a_3 \in (g_1,g_2)$, $Y_{2}^3a_1 \in (g_1,g_2)$, which implies $Y_{2}^3 \mbox{Ker}\ \phi \subset (g_1,g_2)$ and so $Y_{2}^3 \in ((g_1,g_2):\mbox{Ker}\ \phi)$. Here $L=Y_2^3.$

The Jacobian matrix ($\partial g_k/\partial Y_{i})$ contains a $2\times 2$-minor $M=4Y_{2}^2$ and it follows that $Y_2\in H_{B/A}$. So $\phi(M((g_1,g_2):\mbox{Ker}\ \phi))$ contains $\phi(Y_{2}^5)=4x^{15}u_2^5$. Taking $c=15$ we see that $(\phi(M((g_1,g_2):\mbox{Ker}\ \phi)))=(x^c)$.

Take $E=A[u_1,u_2,v_1,v_2]$. The kernel of the map $\psi:A[U_1,U_2,V_1,V_2]\to E$ given by $U_i\to u_i$, $V_i\to v_i$, contains
the polynomials $h_1=U_1^3-U_2^2$, $h_2=3{U_1}^2V_1-2U_2V_2$ and we set $b_1=27U_{2}V_{1}^3-8V_2^3$, $b_2=9U_{1}V_{1}^2-4V_{2}^2$,
$b_3=2U_{1}V_{2}-3U_{2}V_{1}$. The minimal prime ideals of $(h_1,h_2)$ are $Q_1=(h_1,h_2,b_1,b_2,b_3)$, $Q_2=(U_{1},U_{2})$.
As above we see that $\mbox{Ker}\ \psi=Q_1$ since $U_1\notin \mbox{Ker}\ \psi$ and so  $\psi$ induces
 the isomorphism $A[U_{1},U_{2},V_{1},V_{2}]/(h_1,h_2,b_1,b_2,b_3)\cong E$. Note that $U_{2}^2 b_2 \in (h_1,h_2)$, $U_{2} b_3 \in (h_1,h_2)$, $U_{2}^3 b_1 \in (h_1,h_2)$ which implies $U_{2}^3 \mbox{Ker}\ \psi \subset (h_1,h_2)$ and so $U_{2}^3 \in ((h_1,h_2):\mbox{Ker}\ \psi)$.

Now the Jacobian matrix $\partial h/\partial (U_i,V_i)$
contains a minor $M'=4U_2^2$ and so $ U_{2} \in H_{E/A}$. Note that $\psi(M'((h_1,h_2):\mbox{Ker}\ \phi))$ contains $\psi(4U_{2}^5)$ which is mapped by $\psi$ in $4u_2^5\not \in (x)$. Then $v$ factors through the smooth $A$-algebra $$C=(A[U_1,U_2,V_1,V_2]/(h_1,h_2))_{U_2}\cong (A[U_1,U_2,V_1]/(h_1))_{U_2}$$
 because $v$ is the composite map
$$B\cong A[Y_{1},Y_{2},Y_{3},Y_{4}]/(g_1,g_2,a_1,a_2,a_3)\xrightarrow{\rho} E\to C \to {\mathbb Q}[[ x ]],$$
 where $\rho $ is given by $Y_{1}\to x^2u_1$,$Y_{2}\to x^3u_2$, $Y_3\to v_1$, $Y_4\to xv_2$.}
\end{example}

\begin{remark}
{\em In Example \ref{example3}, we gave a General N\'eron Desingularization using no algorithm. Applying our algorithm we will get a more complicated General N\'eron Desingularization.
Example \ref{example4}  illustrates the whole construction of the proof from Theorem \ref{m}.}
\end{remark}
\begin{example}\label{example4}{\em
Let $N=7$ be a bound.
Let $A={\mathbb Q}[x]_{(x)}$, $A'={\mathbb Q}[[ x ]]$, $B=A[Y_1,Y_2]/(f)$,  $f= Y_1^3-Y_2^2$, and $v:B\to A'$ be a morphism given by $Y_1\to x^2u_1$, $Y_2\to x^3u_2$, where $u_1,u_2,$ are as in Example \ref{example1}. Let $u'_1=\sum_{i=0}^{7}\frac{x^i}{i!}$ and compute $u'_2$ as in the previous example. Let $y'$ be given by $u'_1$ and $u'_2$.
Now we follow the steps of algorithm:

\begin{enumerate}

  \item $f=f $
  \item $Y=(Y_1,Y_2)$. Actually the order taken  in the  algorithm was $(Y_2,Y_1)$.  Among the minors $M_1=3Y_1^2, M_2=2Y_2$, we choose $M=2Y_2$; $L=1$ and $v(LM)=2x^3u'_2 \notin (x)^7 $
  \item $P=2Y_2$, $P(y')=2x^3u'_2$, to avoid complexity we take $d=x^3$; $c=3$.
  \item $2c+1=7=N $
  \item $H=\left(
          \begin{array}{cc}
            3Y_1^2 & -2Y_2 \\
            1 & 0 \\
          \end{array}
        \right)$
  \item $G=LG'=\left(
          \begin{array}{cc}
            0 & 2Y_2 \\
            -1 & 3Y_1^2 \\
          \end{array}
        \right)$
  \item $h_1=Y_1-x^2u'_1-2x^6u'_2T_2$,\\
       $h_2=Y_2-x^3u'_2+x^3T_1-3x^7u'^2_1T_2$\\
  \item $Q=-2T_1^2+12x^4u'^2_1T_1T_2 +(24 x^8u'^2_2u'_1-18 x^8 u'^4_1)T_2^2+48x^{12}u'^3_2T_2^3$\\

  \item $f(y')=x^6\cdot a$ where $a=x\alpha \in xA$\\
  \item $g=x\alpha +2u'_2T_1-2T_1^2+12x^4u'^2_1T_1T_2 +(24 x^8u'^2_2u'_1-18 x^8 u'^4_1)T_2^2+48x^{12}u'^3_2T_2^3 $\\
  \item $E=A[Y_1,Y_2,T_1,T_2]/(f,g,h_1,h_2)$\\
  \item $s=2u'_2-4T_1+12x^4u'^2_1T_2$\\
  \item $s'=2u'_2-T_1+3x^4u'^2_1T_2$\\
  \item $C=E_{ss'}\cong (A[T]/g)_{ss'} $
\end{enumerate}

Set $b:=Y-h\in A[T]^2$. Then the above isomorphism is induced by the $A$-morphism
$A[Y,T]\to A[T]$, $Y\to b$.}

\end{example}

We can also compute the Example \ref{example4} in \sing \ using \verb"GND.lib" given in \cite{AP} but the result is harder.

\begin{example}\label{example5}{\em
Let $N=31$ be a bound.
Let $A={\mathbb Q}[x]_{(x)}$, $A'={\mathbb Q}[[ x ]]$,\linebreak $B=A[x^2u_1,x^3u_2,v_1,xv_2]$, where $u_1,u_2,v_1,v_2$ are as in Example \ref{example1}. Suppose that $u_1= \sum_{i=0}^{\infty}\frac{x^i}{i!}, v_1=\sum_{i=0}^{\infty}{i!}{x^i}$. Suppose also that $u'_1=\sum_{i=0}^{31}\frac{x^i}{i!} $ and $v'_1=\sum_{i=0}^{31}{i!}{x^i}$ and we will get $u'_2$ and $v'_2$ according to the relations in Example \ref{example1}. Let $y'$ be given by $u'_1$,
$u'_2$, $v'_1$, $v'_2$.
Let $v: B\to A'$ be the inclusion, that is in fact the map $B\cong  A[Y_{1},Y_{2},Y_{3},Y_{4}]/(f_1,f_2,f_3,f_4,f_5)\to A'$ given by $Y_1\to x^2u_1$, $Y_2\to x^3u_2$, $Y_3\to v_1$, $Y_4\to xv_2$ where $f_1=Y_{1}^3-Y_{2}^2$, $f_2=3Y_{1}^2Y_{3}-2Y_{2}Y_{4}$, $f_3=27Y_{2}Y_{3}^3-8Y_4^3$, $f_4=9Y_{1}Y_{3}^2-4Y_{4}^2$, $f_5=2Y_{1}Y_{4}-3Y_{2}Y_{3}$ same as in Example \ref{example3}.
Now we follow the steps of algorithm:
\begin{enumerate}
  \item $f=(f_1,f_2) $.
  \item $Y=(Y_1,Y_2,Y_3,Y_4)$. Among the minors $M_1=6Y_1^2Y_4+12Y_1Y_2Y_3,\\
   M_2=9{Y_1}^4, M_3=6{Y_1}^2Y_2, M_4=4{Y_2}^2, M_5=0$ we choose $M=4Y_2^2.$ $L=Y_2^3$; and $v(LM)=4x^{15}{u'_2}^5 \notin (x)^{31}.$
  \item $P=4Y_2^5$, $P(y')=4x^{15}u'^5_2$, $d=x^{15}$ and $c=15.$
  \item $2c+1=31.$
  \item
        $H= \left(
          \begin{array}{cccc}
            3Y_1^2 & 0 & 0 & -2Y_2 \\
            6Y_1Y_3& 3Y_1^2 & -2Y_2 & -2Y_4 \\
            1 & 0 & 0 & 0 \\
            0 & 1 & 0 & 0 \\
          \end{array}
        \right)$
  \item $G=LG'= \left(
          \begin{array}{cccc}
            0 & 0 & -4Y_2^5 & 0 \\
            0 & 0 & 0 & -4Y_2^5 \\
            -2Y_2^3Y_4 & 2Y_2^4 & -12Y_1Y_2^4Y_3+6Y_1^2Y_2^3Y_4 & -6Y_1^2Y_2^4 \\
            2Y_2^4 & 0 & -6Y_1^2Y_2^4 & 0 \\
          \end{array}
          \right)$

\end{enumerate}
We stop here with the algorithm since the computations of $h,g, s,s'$ are  difficult already.}
\end{example}

\section{A Constructive  General N\'eron Desingularization in a special case}

Let $(A,\mathfrak{m})$ be a local Artinian ring, $(A',\mathfrak{m}')$ a Noetherian complete local ring of dimension one such that $k=A/\mathfrak{m}\cong A'/\mathfrak{m}'$, and $u:A\rightarrow A'$ be a regular morphism. Suppose that  $k\subset A$. Then $\bar {A}'=A'/\mathfrak{m}A'$ is a discrete valuation ring (shortly a DVR). Choose $x\in A'$ such that its class modulo $\mathfrak{m}A'$ is a local parameter of $\bar A'$, that is, it generates  $\mathfrak{m}'\bar A'$.
Let $B=A[Y]/I$, $Y=(Y_1,\ldots,Y_n)$.
\begin{theorem}[Khalid-Kosar \cite{KK}] \label{bulletin}
Then any morphism $v: B\rightarrow A'$ factors through a smooth $A$-algebra $C$.
\end{theorem}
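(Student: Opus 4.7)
The plan is to adapt the constructive Popescu desingularization of Theorem \ref{m} to this Artinian-base setting. First I would identify $A' \cong A[[x]]$: the quotient $\bar A' = A'/\mathfrak{m}A'$ is a complete DVR with residue field $k$, hence $\bar A' \cong k[[\bar x]]$; the canonical $A$-algebra map $A[[x]] \to A'$ sending the formal variable to $x$ is an isomorphism modulo $\mathfrak{m}$, and a graded argument on the $\mathfrak{m}$-adic filtration (using flatness of $A \to A'$ and the nilpotence of $\mathfrak{m}$) upgrades this to an honest isomorphism $A[[x]] \cong A'$. This puts us in a setup formally identical to that of Theorem \ref{m}, but over an Artinian base.

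Next I would gather the Jacobian data. Setting $y = v(Y) \in A'^n$ and reducing modulo $\mathfrak{m}$, the induced map $\bar v: \bar B = k[Y]/\bar I \to k[[x]]$ fits the DVR setup of Section 1. The analysis there provides $f = (f_1, \ldots, f_r) \in I$, an $r \times r$-minor $M$ of $(\partial f_i/\partial Y_j)$, and $L \in (f):I$ such that $v(LM) \equiv x^c u \pmod{\mathfrak{m}A'}$ for some unit $u \in A'^\ast$ and integer $c \geq 0$. I would then run the construction of Theorem \ref{m}: pick $y' \in A[x]^n$ with $y \equiv y' \pmod{x^N}$ for $N$ sufficiently large, form $d = (LM)(y')$, the augmented Jacobian $H$, its adjoint $G$, the polynomials $h = Y - y' - dG(y')T$ and $g_i = a_i + T_i + Q_i(T)$ with $a = f(y')/d^2$, and define $C = (A[T]/(g))_{ss'}$. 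Smoothness of $C$ over $A$ follows from the Jacobian criterion: $\det(\partial g/\partial T)$ is a unit modulo $\mathfrak{m}$ by the classical construction, and $\mathfrak{m}C$ is nilpotent, hence contained in the Jacobson radical of $C$.

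The main obstacle is the factorization $v = (B \to C \to A')$. In the classical DVR case one has the clean equality $dA' = x^c A'$; here, since $d = x^c u + m$ for some $m \in \mathfrak{m}A'$, this fails in general, so the very definition of $a = f(y')/d^2$ is delicate. Nevertheless, an elementary computation using the nilpotence of $\mathfrak{m}A'$ shows that $d^2 A'$ contains some power $x^M A'$, where $M$ depends on $c$ and the nilpotence index $e$ of $\mathfrak{m}$ (in the examples one finds $M = 2c + e - 1$). Choosing $N \geq M+1$ therefore ensures $f(y') \in d^2 A'$ so that $a \in x A'^r$ is well-defined, and Hensel's lemma in $A' \cong A[[x]]$ then produces a unique small $t \in A'^p$ with $g(t) = 0$. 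The relation $G(y')H(y') = d \cdot \mbox{Id}_n$, combined with the block structure of $H$ (whose last $n-r$ rows are $(0 \mid \mbox{Id}_{n-r})$), forces $y = y' + dG(y') t$, which is the desired factorization $v = (B \to C \to A')$.
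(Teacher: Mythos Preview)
Your approach is genuinely different from the paper's, and it has a real gap.

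The paper does \emph{not} run the construction of Theorem~\ref{m} over the Artinian base $A$. Instead it uses the coefficient field $k\subset A$ to write $A=k[T]/\mathfrak a$ and $A'\cong A\otimes_k k[[x]]$, expands each $v(Y_i)=\sum_{\alpha}y_{i\alpha}T^{\alpha}$ with $y_{i\alpha}\in k[[x]]$, and passes to the finitely generated sub-$k[x]_{(x)}$-algebra $\bar B_1=k[x]_{(x)}[(y_{i\alpha})]\subset k[[x]]$. Because $\bar B_1$ is by construction a \emph{subring} of $k[[x]]$, the inclusion $\bar v_1$ is injective, $\bar B_1$ is a domain, and one is genuinely in the situation of Section~1 and Theorem~\ref{m} over the honest DVR $k[x]_{(x)}$; the resulting smooth $k$-algebra $\tilde C$ is then tensored back up to $C=A\otimes_k\tilde C$.

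Your direct attack breaks at the very first step, where you assert that one can find $f,M,L$ with $v(LM)\equiv x^{c}u\pmod{\mathfrak m A'}$ for some unit $u$. This is not guaranteed: the reduction $\bar v\colon \bar B=k[Y]/\bar I\to k[[x]]$ need not be injective, and its image can land entirely in the non-smooth locus of $\bar B/k$. Concretely, take $A=k[t]/(t^{2})$, $B=A[Y]/(Y^{2})$, and $v(Y)=tq$ for any $0\neq q\in k[[x]]$. Here $I=(Y^{2})$, the only admissible Jacobian data are $f=Y^{2}$, $M=2Y$, $L=1$, and $v(LM)=2tq\in\mathfrak m A'$, so $\overline{v(LM)}=0$ and no integer $c$ exists. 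Worse, $d=v(LM)$ is a zerodivisor with $d^{2}=0$, so your inclusion $x^{M}A'\subset d^{2}A'$ is impossible and $a=f(y')/d^{2}$ is undefined. The paper's decomposition into $T$-components handles this example effortlessly: one gets $\bar B_1=k[x]_{(x)}[q]$, already smooth, and $C=A\otimes_k k[x,Z]$ with $B\to C$ given by $Y\mapsto tZ$.

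A secondary issue is that even when some $\overline{v(LM)}\neq 0$, your sentence ``the analysis there provides \ldots\ $L\in(f):I$'' conflates two rings: Section~1 would only hand you $\bar L\in(\bar f):\bar I$ in $k[Y]$, and an arbitrary lift satisfies merely $LI\subset(f)+\mathfrak m A[Y]$, which is not enough to kill all of $I$ in $E_{ss'}$. One would have to separately argue that $(f):I$ computed in $A[Y]$ contains an element whose image under $v$ is not in $\mathfrak m A'$.
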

\begin{proof} Here we recall in sketch the proof from \cite{KK} because we
need it in the next algorithm.
 Let $A_1=A[x]_{(x)}$ and $u_1$ be the inclusion $A_1\subset A'$. Then $u_1$ is a regular morphism.  Let $B_1=A_1\otimes_A B$ and $v_1:B_1\rightarrow A'$ be
 the composite map $a_1\otimes b\mapsto u_1(a_1)\cdot v(b)$.

There exists a certain $s$ such that $\mathfrak{m}^s=0$ because $A$ is an Artinian local ring and so $A$ has the form $A=k[T]/{\mathfrak a}$,
 $T=(T_1,\ldots,T_m)$, and the maximal ideal of $A$ is generated by $T$. Then for all $i\in [m]=\{1,\ldots, m\}$, ${T_i}^s\in {\mathfrak a}$ and
 $A'=k[[ x ]][T]/(\mathfrak{a}) \cong A \otimes_k k[[ x ]]$. Note that
$v(Y_i)=\hat{y}_i$ has the form $\sum_{\alpha \in \mathbb{N}^m, |\alpha|<s} y_{i\alpha} T^\alpha$,  $T^\alpha={T_1}^{\alpha_1}\cdots {T_m}^{\alpha_m},\ |\alpha|=\alpha_1+\cdots+\alpha_m$ and $y_{i\alpha}\in {\bar A}'=k[[ x ]]$.

Set ${\bar B}_1=\bar{A}_1[(y_{i\alpha})_{\alpha}]\subset k[[ x ]]$ and  let ${\bar v}_1$ be this inclusion. Then $v$ factors through $B_1=A\otimes_k {\bar B}_1\subset A'$, that is $v$ is the composite map $B\xrightarrow{q} B_1\xrightarrow{A\otimes_k{\bar v}_1} A'$,
 where $q$ is defined by $Y_i\to \sum_{\alpha} T^{\alpha}\otimes y_{i\alpha}$. Applying  Theorem  \ref{m} to the case ${\bar A}_1=k[x]_{(x)}$, $\bar A'$, ${\bar B}_1$ and ${\bar v}_1={\bar A}_1\otimes_{A_1}v_1$
 we see that
${\bar v}_1$ factors through a smooth $k$-algebra ${\bar C}$.  Then $A\otimes_k{\bar v}_1$ factors through $A\otimes_k {\bar C}$. It follows that $v$   factors through a smooth $A$-algebra $C$ (see  e.g. \cite[Lemma 1]{KK}). \hfill\
\end{proof}

\section{A special Algorithm}\label{algo2}

In our next algorithm we will use the N\'eron Desingularization algorithm given in Section \ref{algo1}.
\vskip 0.3 cm
{\bf Special-Neron-Desingularization}
\vskip 0.3 cm

Input: $N\in \mathbb{Z}_{>0}$ a bound \\
$ A=k[T]/(a),$ $a=(a_1,\ldots,a_e)$  $T=(T_1,\ldots,T_m), {T_i}^s\in (a)$, $A'=k[[ x ]][T]/(a),$
$B=A[Y]/I$, $I=(g_1,\ldots, g_l), g_i\in k[T,Y], Y=(Y_1,\ldots,Y_n)$, integers $q$, $\alpha$.
$v:B\to A'$ an $A-$morphism given by $v(Y_i)=\hat{y}_i= \sum_{\alpha \in \mathbb{N}^m, |\alpha|<s} y_{i\alpha}\cdot T^\alpha$,   $y_{i\alpha}\in k[[ x ]]$,
 ${\bar A}'=k[[ x ]]$.

Output: A N\'eron Desingularization $(C,\pi)$ of $v:B\rightarrow A'$ or the message ``the algorithm fails since the bound is too small''.

\begin{enumerate}
  \item $\bar{A}_1=k[x]_{(x)}$, $\bar{B}_1:=\bar{A}_1[({y}_{i\alpha})_\alpha]$, $\bar{v}_1$ is the inclusion $\bar{B}_1\subset \bar{A}'$
  \item Write $(\bar{C},\bar{\pi}):=$ Neron-Desingularization\_Dim1 for $N,\bar{A}_1, \bar{A'}, \bar{B}_1, \bar{v}_1$
  \item $\bar{C}:=E_{ss'}=(\bar{A_1}[(Y_{i\alpha})_{\alpha},T]/L)_{ss'}$ where $L=<(l_i)>$

  \item $\bar{C}:=\bar{A}_1\otimes_k \tilde{C}$ where $\tilde{C}=((k[x,(Y_{i\alpha})_{\alpha},T]/\tilde{L})_{\tilde{s}\tilde{s}'})_{\eta}$ and
  $\tilde{s}=s\cdot \eta$, $\tilde{s}'=s'\cdot \eta$, $\tilde{L}=<(\tilde{l_i})>,\tilde{l_i}=l_i\cdot\eta$, where
  $\eta \in k[x] \setminus (x)$
  \item $C:=A\otimes_k \tilde{C}$, $\pi$ is induced by $\bar{\pi}$
  \item return $(C,\pi)$.
\end{enumerate}

\begin{remark}{\em
Here we give two examples for the same rings. Example \ref{buletin1} gives a N\'eron Desingularization which comes from the direct computations, while Example \ref{buletin2} gives a smooth $A-$algebra $C$ which we get by following the algorithm of Section \ref{algo1}.}
\end{remark}

\begin{example}\label{buletin1}{\em

Let $A={\mathbb Q}[t]/(t^2)$, $A'={\mathbb Q}[[ x ]][t]/(t^2)$, $A_1={\mathbb Q}[x]_{(x)}[t]/(t^2)$,
$B=A[Y_1,Y_2]/(Y_1^3-Y_2^2)$ and $u_1,v_1$ two formal power series from ${\mathbb Q}[[ x ]]$
which are algebraically independent over ${\mathbb Q}(x)$ and $u_1(0)=v_1(0)=1$. By the Implicit Function
Theorem there exists $u_2\in \mathbb Q[[ x ]]$ such that $u_2^2=u_1^3$. Set $v_2=(3/2)xu_1^2v_1u_2^{-1}$,
${\hat y}_1=x^2u_1+tv_1$, ${\hat y}_2=x^3u_2+txv_2$. We have
$g({\hat y}_1,{\hat y_2})=x^6(u_1^3-u_2^2)+tx^4(3u_1^2v_1-2u_2v_2)=0$
and we may define $v:B\to A'$ by $Y\to ({\hat y}_1,{\hat y_2})$.

Take $\bar{B}_1= \bar{A}_1[x,x^2u_1,x^3u_2,v_1,xv_2].$ Then ${\bar v}={\mathbb Q}\otimes_Av: B/tB\to {\mathbb Q}[[ x ]]$
factors through $\bar{B}_1$. Now $\bar{B}_1=\bar{A}_1[(Y_{i\alpha})_{\alpha}]/J$ as in Example \ref{example3}.
Since $\bar{A}_1=k[x]_{(x)}$, $\bar{A'}=k[[ x ]]$, $\bar{B}_1=\bar{A}_1[(Y_{i\alpha})_{\alpha}]/J$ so applying the algorithm from Section \ref{algo1} for $\bar{A}_1, \bar{A'}, \bar{B}_1$ we get $\bar{C}=(\bar{A}_1[U_1,U_2,V_1]/(h_1))_{U_2}=\bar{A}_1\otimes_k \tilde{C}$ where $\tilde{C}= (k[U_1,U_2,V_1]/(h_1))_{U_2}$ and $U_1,U_2,V_1, h_1$ are as in Example \ref{example3}.
So $C=A\otimes_k \tilde{C}\cong (A[U_1,U_2,V_1]/(h_1))_{U_2}$.}

\end{example}

\begin{example}\label{buletin2}{\em
Considering everything as in Example \ref{buletin1} until we apply the algorithm from Section \ref{algo1}, we get $\bar{C}=E_{ss'}$ where
$E_{ss'}$ is the same as in example \ref{example4}. So $C=A\otimes_k \tilde{C}$ where $\tilde{C}$ can
be obtained as in Example \ref{buletin1}.}
\end{example}

\noindent\textbf{Acknowledgement}\textit{The first author  gratefully acknowledges the support from the ASSMS GC. University Lahore, for arranging her visit to Bucharest, Romania and she is also grateful to the Simion Stoilow Institute of the Mathematics of the Romanian Academy for inviting her. }

\smallskip\noindent
Received:

%\salt

\address{Abdus Salam School of Mathematical Sciences,GC University, Lahore, Pakistan.}
\email{asmakhalid768@gmail.com}

\address{Adrian Popescu}
\email{adrian.mihail.popescu@gmail.com}

\address{Dorin Popescu, Simion Stoilow Institute of Mathematics of the Romanian Academy, Research unit 5,
University of Bucharest, P.O.Box 1-764, Bucharest 014700, Romania}
\email{dorin.popescu@imar.ro}

\end{document}